\def\T={\buildrel {\scriptscriptstyle\triangle} \over =}
\def\Tr{{\mathrm{Tr}}}
\def\Ex{{\mathbb{E}}}
\def\Prob{\mathbb{P}}
\def\p{{^{\prime}}}
\def\mc{\mathcal}
\def\ba{\begin{array}}
\def\ea{\end{array}}
\def\l{\left(}
\def\r{\right)}
\def\lb{\left\{}
\def\rb{\right\} }
\def \orb {\mc{O}}
\def \noff{\|_{1, \text{off}}}
\def\one{\mathbbm{1}}
\def\iid{i.i.d.}
\def\Sym{\text{Sym}}
\def\kron{\otimes}
\def\bfm{\begin{fmpage}{1.01\linewidth}}
\def\efm{\end{fmpage}}
\def\mf{\mathfrak}
\newtheorem{theorem}{Theorem}[section]
\newtheorem{lemma}{Lemma}[section]
\newtheorem{corollary}{Corollary}[section]
\newtheorem{definition}{Definition}[section]
\newtheorem{assumption}{Assumption}[section]
\newtheorem{remark}{Remark}[section]
\newtheorem{example}{Example}[section]
\newcommand{\C}{\mathbb{C}}
\newcommand{\R}{\mathbb{R}}
\newcommand{\Z}{\mathbb{Z}}
\newsavebox{\fmbox}
\newenvironment{fmpage}[1]
{\begin{lrbox}{\fmbox}\begin{minipage}{#1}}
{\end{minipage}\end{lrbox}\fbox{\usebox{\fmbox}}}
\title{Group Symmetry and Covariance Regularization}
\author{Parikshit Shah and Venkat Chandrasekaran\\
\small email: pshah@discovery.wisc.edu; venkatc@berkeley.edu \normalsize}
\date{}
\begin{document}
\maketitle

\begin{abstract}
Statistical models that possess symmetry arise in diverse settings such as random fields associated to geophysical phenomena, exchangeable processes in Bayesian statistics, and cyclostationary processes in engineering. We formalize the notion of a symmetric model via group invariance. We propose projection onto a group fixed point subspace as a fundamental way  of regularizing covariance matrices in the high-dimensional regime. In terms of parameters associated to the group we derive precise rates of convergence of the regularized covariance matrix and demonstrate that significant statistical gains may be expected in terms of the sample complexity. We further explore the consequences of symmetry on related model-selection problems such as the learning of sparse covariance and inverse covariance matrices. We also verify our results with simulations.
\end{abstract}
%

\section{Introduction} \label{sec:introduction}
An important feature of many modern data analysis problems is the small number of samples available relative to the dimension of the data.  Such \emph{high-dimensional} settings arise in a range of applications in bioinformatics, climate studies, and economics.  A fundamental problem that arises in the high-dimensional regime is the poor conditioning of sample statistics such as sample covariance matrices \cite{BickelLevina1} ,\cite{BickelLevina2}. Accordingly, a fruitful and active research agenda over the last few years has been the development of methods for high-dimensional statistical inference and modeling that take into account \emph{structure} in the underlying model.  Some examples of
structural assumptions on statistical models include models with a few latent factors (leading to low-rank covariance matrices) \cite{Fan_Fan_Lv_2007}, models specified by banded or sparse covariance matrices \cite{BickelLevina1}, \cite{BickelLevina2}, and Markov or graphical models \cite{Lauritzen_book,RWRY,Meinshausen}.

The focus of this paper is on exploiting \emph{group symmetries} in covariance matrices.  Models in which the distribution of a collection of random variables is invariant under certain permutations of the variables have been explored in many diverse areas of statistical research \cite{Lauritzen_symmetry}.  Symmetry assumptions have played a prominent role within the context of covariance matrices and multivariate Gaussians \cite{InvariantNormalModels} and these assumptions are of interest in numerous applications (see \cite{Lauritzen_symmetry} for a detailed list).



We systematically investigate the statistical and computational benefits of exploiting symmetry in high-dimensional covariance estimation.  As a very simple example consider the estimation of the variances of $p$ independent random variables from $n$ samples of the variables.  Since the variables are independent, this problem is equivalent to estimating the variances of each variable separately from $n$ samples.  Suppose we are additionally given that the variables are identically distributed -- we now need to estimate just a single parameter from what are effectively $n \times p$ samples.  This very elementary example demonstrates the potential for improvements in the sample complexity as well as computational complexity in models with symmetries.  We generalize this basic insight to much more complicated settings in which the distribution underlying a collection of random variables may be symmetric with respect to an arbitrary subgroup of the symmetric group.  More formally, we investigate multivariate Gaussians specified by \emph{invariant covariance matrices}:
\begin{equation*}
\Sigma = \Pi_g \Sigma \Pi_g^T ~~~~~ \forall \Pi_g \in \mathfrak{G},
\end{equation*}
where $\mathfrak{G}$ is some subgroup of the symmetric group, i.e., the group of all permutations.  Associated to each subgroup $\mathfrak{G}$ of the symmetric group is the \emph{fixed-point subspace} $\mathcal{W}_\mathfrak{G}$ of matrices that is invariant with respect to conjugation by each element of $\mathfrak{G}$.  This subspace plays a fundamental role via Schur's lemma from representation theory in our analysis of the benefits in sample complexity and computational complexity of regularizing invariant covariance matrices.

The general advantages of symmetry exploitation are numerous
\begin{itemize}
\item Problem size: One advantage is that when symmetry is incorporated in the model, the problem size often reduces, and the new instance can have significantly fewer number of variables and constraints, which can lead to dramatic computational speed-ups. This is exploited for example in finite-element methods for partial differential equations \cite{fassler}. 
\item Better statistical properties: As we will see in this paper, exploiting symmetry also leads to statistical gains in terms of order-of-magnitude gains in the sample complexity of model selection. 
\item Numerical benefits: Another advantage is the removal of \emph{degeneracies} in the problem that arises from the high multiplicity of eigenvalues that is typically associated to symmetric models. Such multiplicities are a common source of difficulties in numerical methods, and they can be properly handled by suitably exploiting symmetry in the problem. Symmetry-aware methods in general have better numerical conditioning, are more reliable and lead to faster and more accurate solutions.
\end{itemize}


This paper consists of three main technical contributions.  First, we precisely quantify in terms of properties of $\mathfrak{G}$ the improvement in rates of convergence when the sample covariance matrix (obtained from $n$ samples) is projected onto $\mathcal{W}_\mathfrak{G}$.  Our analysis holds for the spectral or operator norm, the Frobenius norm, and the $\ell_\infty$ norm (maximum absolute entry).  Second, we study the implications of these improved rates of convergence by specializing recent results on covariance estimation of Bickel and Levina \cite{BickelLevina2}, and of Ravikumar et al. \cite{RWRY} to our setting with invariant covariance matrices.  These results quantitatively demonstrate the benefits of taking symmetry structure into account in covariance estimation tasks.  Finally, we discuss the computational benefits of incorporating symmetry in covariance regularization.  Specifically, we describe how large-dimensional convex programs based on regularized maximum-likelihood can be simplified significantly to smaller-sized convex programs by incorporating prior information about symmetry.

Symmetry and its consequences in the context of statistics, and more broadly in computational mathematics, has been well studied. We mention some related work here. In the Gaussian setting, work has been done related to testing of symmetry in statistical models \cite{wilks,olkin_press,votaw}. In the context of Markov random fields symmetry restrictions have been studied in \cite{Whittle,Besag1,Besag2,Hylleberg,Andersson2,Madsen}. We also mention the work of H${\o}$jsgaard and Lauritzen \cite{Lauritzen_symmetry}, who study edge and vertex symmetries for covariance and inverse covariance (graphical models) and exploitation of symmetry for inference problems. They also cite interesting examples from the social sciences (such as distribution of test scores among students and results of psychological tests) where symmetric structure has been experimentally observed. More broadly, symmetry has played an important role in other computational problems such as optimization \cite{Vallentin1,gatermann_parrilo,deKlerk,Kanno}, in the computation of kissing numbers and sphere-packing problems \cite{Vallentin2}, in coding theory \cite{Schrijver}, in truss topology optimization \cite{Bai}, and in the design of rapidly mixing Markov chains \cite{Boyd}.
%
%

In the remainder of Section~\ref{sec:introduction} we explain a few specific applications where exploiting symmetry in statistical models is of interest. We also set up some of the notation that will be used throughout the paper. In Section~\ref{sec:symmetric_models} we introduce the necessary group theoretic preliminaries, the notion of $\mf{G}$-invariant covariance matrices, and related statistical quantities. In Section~\ref{sec:3} we state some of our main results that quantify the statistical gains when symmetry is exploited judiciously. In Section~\ref{sec:sparse_model_selection}  we extend our results from Section~\ref{sec:3} to selecting sparse covariance and inverse covariance models. In Section~\ref{sec:computational_aspects}, we make some brief remarks regarding how computational benefits can be gained when selecting symmetric models. In Section~\ref{sec:experimental_results}, we present some simulation results that clarify the statistical gains one may expect when symmetry is exploited. In Section~\ref{sec:conclusion}, we state some concluding remarks.

\subsection{Applications}
In this section we motivate the problem under consideration by briefly describing a few applications in which estimation of symmetric covariance structure arises naturally.
\subsubsection{Random Fields in Physical Phenomena}
Many signals that capture the spatial variability of natural phenomena are described in the framework of random fields \cite{Moura}. Random fields arise in a variety of engineering contexts, for example the temperature distribution of materials, the circulation of ocean fields in oceanography, or the transport of groundwater in hydrology \cite{Moura}. A classical model is associated with the well-known Poisson's equation:
\begin{equation} \label{eq:poisson}
\nabla^2 \phi(x) =f(x) \qquad x \in \R^d
\end{equation}
where $\nabla^2$ is the Laplace operator. In hydrology, this equation governs the steady state flow of a fluid in $\R^3$, whereas in electrostatics it governs the distribution of the potential in a variety of settings such as transmission lines \cite{Moura}. Under the right technical conditions, if $f(x)$ is a white noise process, the stochastic process $\phi(x)$ in \eqref{eq:poisson} is a Gauss-Markov random field \cite{GMRFbook}. While Poisson's equation \eqref{eq:poisson} is a \emph{sample-path} representation of the process $\phi(x)$, a particularly useful representation of this process is the covariance $R(x_1,x_2)$ associated to this random field. Indeed the covariance $R(x_1,x_2)$ is the Green's function of the biharmonic equation, and its properties have been studied in detail by Moura and Goswami \cite{Moura}.

A useful observation that is pertinent in our context is that the Laplacian as well as the biharmonic equation are invariant under the action of the orthogonal group, i.e. they are isotropic. If in addition the forcing process $f(x)$ has isotropic stochastic properties (one such example being a white noise process) and the boundary conditions of the physical process are also isotropic, then the covariance $R(x_1,x_2)$ will also be isotropic.

When such random fields are encountered in practice, the usual engineering approach is to discretize the domain so as to make the problem computationally tractable. The restriction of the covariance $R(x_1,x_2)$ to the finite discretization of the domain results is a covariance matrix $\hat{R}$. Recognizing the symmetry in the problem, it is often useful to construct a \emph{symmetry-preserving discretization} of the space \cite{symmetry_discretizations}. Such a modeling paradigm naturally leads to a covariance matrix that is invariant with respect to a discrete group $\mathfrak{G}$ which is a finite subgroup of the orthogonal group. A motivating question in this paper then is: \\
``Given samples of the stochastic process $\phi(x)$ on this discretized domain, can one exploit the symmetry of the space to obtain a good statistical approximation of the covariance matrix $\hat{R}$?''

\subsubsection{Bayesian Models}
A fundamental notion in Bayesian statistics is that of exchangeability. Indeed, in such settings one often deals with a collection of random variables $X_1, \ldots, X_n$ where it is not appropriate to assume that this collection is $\iid$. It is far more natural to assume instead that the collection of random variables is \emph{exchangeable} or \emph{partially exchangeable}. We give two such examples:
\begin{enumerate}
\item \textbf{Urn models:} Consider an urn with $n_r$ red balls and $n_b$ blue balls. A quantity $T$ balls are randomly drawn \emph{without replacement} from the urn. Let us define the random variable:
$$
X_i=\left\{ 
 \right.
\end{equation*}
\end{lemma}
\begin{proof}
See \cite[Chapter 1]{Buldygin_Kozachenko}.
\end{proof}
The canonical examples of pre-Gaussian random variables are $\chi^2$ distributed random variables.
Indeed, let $D \in \R^{p \times p}$ be a diagonal positive definite matrix and let $X \sim \mc{N}(0,D)$ be a $\R^p$-valued Gaussian random variable. Then the random variable $X^T X-\Tr(D)$ is pre-Gaussian (see \cite[Lemma 5.1, Example 3.1]{Buldygin_Kozachenko}) with parameters
\begin{equation} \label{eq:indep_chi_square}
\tau^2=\Tr(D) \qquad  \Lambda=\frac{1}{2\max_i D_{ii}}.
\end{equation} 


\begin{lemma} \label{lemma:nonstandard_chi_square2}
Let $Y \in \R^{n} \sim \mc{N}(0,\Sigma)$. Let $Q=Q^T \succ 0$ and let $\lambda_{\max}(\cdot)$ denote the maximum eigenvalue. Then \small
$$\Ex \exp\left(\lambda(Y^{T}Q Y-\Tr(Q\Sigma)) \right)  \leq \exp \left(\frac{1}{2}\lambda^2\Tr(Q\Sigma) \right) \; \forall \lambda \in \left[-\Lambda, \Lambda \right],$$ \normalsize
where $\Lambda=\frac{1}{2  \lambda_{\max} \l\l\Sigma^{\frac{1}{2}}\r^TQ \Sigma^{\frac{1}{2}} \r}$.
\end{lemma}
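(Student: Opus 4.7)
The plan is to reduce the general quadratic form $Y^T Q Y$ with $Y \sim \mc{N}(0, \Sigma)$ to a weighted sum of independent $\chi^2$ random variables, and then invoke the pre-Gaussian property already recorded in \eqref{eq:indep_chi_square}.

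First, I would write $Y = \Sigma^{1/2} Z$ where $Z \sim \mc{N}(0, I_n)$, so that
$$
Y^T Q Y \;=\; Z^T \bigl(\Sigma^{1/2}\bigr)^T Q\, \Sigma^{1/2} Z \;=\; Z^T A Z,
$$
where $A := (\Sigma^{1/2})^T Q \Sigma^{1/2}$ is symmetric positive definite (since $Q \succ 0$ and $\Sigma \succeq 0$, plus invertibility assumptions implicit in having $\Sigma^{1/2}$; if $\Sigma$ is singular the same argument works on its range). Next, diagonalize $A = U D U^T$ with $U$ orthogonal and $D = \diag(d_1, \ldots, d_n)$, and set $W := U^T Z$. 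Orthogonal invariance of the standard Gaussian gives $W \sim \mc{N}(0, I_n)$, hence
$$
Y^T Q Y \;=\; W^T D W \;=\; \sum_{i=1}^n d_i W_i^2,
$$
with the $W_i$ i.i.d.\ standard normal. Moreover, by cyclicity of the trace, $\Tr(Q\Sigma) = \Tr(\Sigma^{1/2} Q \Sigma^{1/2}) = \Tr(A) = \Tr(D)$.

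At this point the problem is exactly the diagonal case: if we let $X_i = \sqrt{d_i}\, W_i$, then $X \sim \mc{N}(0, D)$ and $X^T X = W^T D W = Y^T Q Y$, while $\Tr(D) = \Tr(Q\Sigma)$. The pre-Gaussian identity \eqref{eq:indep_chi_square} applied to $X^T X - \Tr(D)$ then yields, via Definition \ref{def:pregaussian},
$$
\Ex \exp\!\bigl(\lambda(Y^T Q Y - \Tr(Q\Sigma))\bigr) \;\leq\; \exp\!\Bigl(\tfrac{1}{2}\lambda^2 \Tr(Q\Sigma)\Bigr)
$$
for all $\lambda$ with $|\lambda| < \tfrac{1}{2 \max_i d_i}$. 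Since $\max_i d_i = \lambda_{\max}(D) = \lambda_{\max}(A) = \lambda_{\max}((\Sigma^{1/2})^T Q \Sigma^{1/2})$, this $\Lambda$ matches the constant in the statement, completing the proof.

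There is no real obstacle here: the argument is essentially a change of variables that diagonalizes the quadratic form and reduces it to the independent $\chi^2$ case already handled in \eqref{eq:indep_chi_square}. The only minor subtlety is bookkeeping of $\Sigma^{1/2}$ when $\Sigma$ is not strictly positive definite; working on the range of $\Sigma$ (equivalently, restricting to the support of $Y$) takes care of this, and does not affect either the variance parameter $\Tr(Q\Sigma)$ or $\lambda_{\max}((\Sigma^{1/2})^T Q \Sigma^{1/2})$.
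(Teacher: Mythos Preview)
Your proof is correct and follows essentially the same route as the paper: reduce $Y^TQY$ to a diagonal quadratic form in independent standard Gaussians and invoke the pre-Gaussian bound \eqref{eq:indep_chi_square}. The only cosmetic difference is that the paper first substitutes $X=Q^{1/2}Y$ and then diagonalizes $Q^{1/2}\Sigma Q^{1/2}$ (using $\lambda_{\max}(Q^{1/2}\Sigma Q^{1/2})=\lambda_{\max}(\Sigma^{1/2}Q\Sigma^{1/2})$ at the end), whereas you substitute $Y=\Sigma^{1/2}Z$ and diagonalize $\Sigma^{1/2}Q\Sigma^{1/2}$ directly; the two are equivalent.
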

\begin{proof}
For $Y\sim \mc{N}(0,\Sigma)$ let us define $X=Q^{\frac{1}{2}}Y$. Then $X \sim \mc{N}(0,Q^{\frac{1}{2}} \Sigma Q^{\frac{1}{2}})$ and $Y^TQY-\Tr(Q \Sigma) = X^TX-\Tr(Q \Sigma)$. Consider $R = U^T X$ where $U$ is unitary and has the property that $U^T Q^{\frac{1}{2}} \Sigma Q^{\frac{1}{2}} U=D$ and $D$ is diagonal. Applying \eqref{eq:indep_chi_square} to $R^T R -\Tr(Q \Sigma)=Y^TQY-\Tr(Q\Sigma)$, and using the fact that $\Tr (Q^{\frac{1}{2}} \Sigma Q^{\frac{1}{2}})=\Tr (Q \Sigma)$, and $\lambda_{\max} (Q^{\frac{1}{2}} \Sigma Q^{\frac{1}{2}})= \lambda_{\max} (\Sigma^{\frac{1}{2}} Q \Sigma^{\frac{1}{2}})$, we have the desired result.
\end{proof}

In the subsequent analysis, it will be more convenient to deal with normalized random variables, which we define as:
$$
\bar{X}^{(k)}_{g(i)}:=\frac{X^{(k)}_{g(i)}}{\sqrt{\Sigma_{ii}}}.
$$
Note that $\Ex \l\bar{X}^{(k)}_{g(i)} \r^2=1$. We also define:
$$
\rho_{ij}:=\frac{\Sigma_{ij}}{\sqrt{\Sigma_{ii}\Sigma_{jj}}},
$$
and note that by symmetry of the covariance matrix $\Sigma_{ii}=\Sigma_{g(i),g(i)}$ and hence $\rho_{ij}=\rho_{g(i),g(i)}$.
Define
\begin{equation}\label{eq:Yij}
Y_{ij}^{(k)}
=\sum_{(g(i),g(j)) \in \orb(i,j)} \bar{X}_{g(i)}^{(k)}\bar{X}_{g(j)}^{(k)}.
\end{equation}
We observe that 
\begin{equation} \label{eq:Uijk}
\begin{split}
Y_{ij}^{(k)}-|\orb(i,j)|\rho_{ij}&=\frac{1}{4} \left( U_{ij}^{(k)} -V_{ij}^{(k)} \right) \qquad \text{where,} \\
U_{ij}^{(k)}&=\sum_{(g(i),g(j)) \in \orb(i,j)} (\bar{X}_{g(i)}^{(k)}+\bar{X}_{g(j)}^{(k)})^2 -2|\orb(i,j)|(1+\rho_{ij}) \\
V_{ij}^{(k)}&=\sum_{(g(i),g(j)) \in \orb(i,j)} (\bar{X}_{g(i)}^{(k)}-\bar{X}_{g(j)}^{(k)})^2 -2|\orb(i,j)|(1-\rho_{ij}).\\
\end{split}
\end{equation}

For the covariance matrix of the normalized random variables occuring in the orbit of $(i,j)$ define  $\sigma_{ij}=\| \Sigma_{\mc{V}(i,j)} \| $, the spectral norm restricted to the variables occuring in the orbit. Note that $\sigma_{ij} \leq \|\Sigma \|$ and is hence bounded by a constant.
Also recall that $d_{ij}$ is the degree of the orbit $\orb(i,j)$.

\begin{lemma} \label{lemma:orbit_rate}
The following inequalities hold:
\begin{align*}
\Ex \exp(\lambda U_{ij}^{(k)}) &\leq \exp \l |\mc{O}(i,j)| \lambda^2(1+\rho_{ij})  \r\text{for all} \; \lambda \in \left[-\frac{1}{4d_{ij}\sigma_{ij}}, \frac{1}{4d_{ij}\sigma_{ij}} \right] \\
\Ex \exp(\lambda V_{ij}^{(k)}) &\leq \exp \l |\mc{O}(i,j)| \lambda^2(1-\rho_{ij})   \r\text{for all} \; \lambda \in \left[-\frac{1}{4d_{ij}\sigma_{ij}}, \frac{1}{4d_{ij}\sigma_{ij}} \right]. \\
\end{align*}
\end{lemma}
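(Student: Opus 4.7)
\textbf{Proof proposal for Lemma \ref{lemma:orbit_rate}.}

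The plan is to recognize $U_{ij}^{(k)}$ and $V_{ij}^{(k)}$ as centered quadratic forms in a Gaussian vector and then invoke Lemma \ref{lemma:nonstandard_chi_square2}. Let $Z := \bar{X}^{(k)}_{\mc{V}(i,j)}$ denote the vector of normalized samples indexed by the vertices in $\mc{V}(i,j)$; since G-invariance makes $\Sigma_{aa}=\Sigma_{g(a),g(a)}$, the entries of $Z$ are Gaussian with unit variance, and their covariance matrix is the correlation matrix $P$ obtained from $\Sigma_{\mc{V}(i,j)}$ (so $\|P\|$ is exactly $\sigma_{ij}$ under the paper's convention). Introduce
\begin{equation*}
Q_{+} := \sum_{(a,b)\in\orb(i,j)}(e_a+e_b)(e_a+e_b)^T, \qquad Q_{-} := \sum_{(a,b)\in\orb(i,j)}(e_a-e_b)(e_a-e_b)^T,
\end{equation*}
so that $U_{ij}^{(k)} = Z^T Q_{+} Z - 2|\orb(i,j)|(1+\rho_{ij})$ and $V_{ij}^{(k)} = Z^T Q_{-}Z - 2|\orb(i,j)|(1-\rho_{ij})$. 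Both $Q_{+}$ and $Q_{-}$ are positive semidefinite.

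Next I would verify that the constant being subtracted is precisely $\Tr(Q_{\pm}P)$, which would let me apply Lemma \ref{lemma:nonstandard_chi_square2} directly. For $Q_{+}$, expanding $(e_a+e_b)^T P(e_a+e_b) = P_{aa}+P_{bb}+2P_{ab} = 2(1+\rho_{ab})$ and using $\rho_{ab}=\rho_{ij}$ for every pair in $\orb(i,j)$ gives $\Tr(Q_{+}P)=2|\orb(i,j)|(1+\rho_{ij})$; the analogous computation for $Q_{-}$ gives $2|\orb(i,j)|(1-\rho_{ij})$. Plugging into Lemma \ref{lemma:nonstandard_chi_square2} yields $\Ex\exp(\lambda U_{ij}^{(k)}) \le \exp(|\orb(i,j)|(1+\rho_{ij})\lambda^2)$ and the analogous bound for $V_{ij}^{(k)}$, matching the target variance proxies exactly.

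The main obstacle is controlling the admissible range of $\lambda$, which Lemma \ref{lemma:nonstandard_chi_square2} ties to $\lambda_{\max}(P^{1/2}Q_{\pm}P^{1/2}) \le \|Q_{\pm}\|\cdot \sigma_{ij}$; I therefore need $\|Q_{\pm}\|\le 2d_{ij}$. I plan to bound the operator norm of $Q_{+}$ (and similarly $Q_{-}$) using $\|Q_{+}\|\le\|Q_{+}\|_{\infty,\infty}=\max_a\sum_b |(Q_{+})_{ab}|$. The diagonal entry $(Q_{+})_{aa}$ counts, with appropriate multiplicity, the pairs of $\orb(i,j)$ incident to $a$, which is at most $d_{ij}$; the off-diagonal entries $(Q_{+})_{ab}$ count the number of times the unordered pair $\{a,b\}$ appears, and their sum over $b\neq a$ is again at most $d_{ij}$ by the definition of orbit degree. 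Hence $\|Q_{+}\|_{\infty,\infty}\le 2d_{ij}$, and the same combinatorial argument (with signs irrelevant for absolute values) gives $\|Q_{-}\|\le 2d_{ij}$.

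Combining, $\lambda_{\max}(P^{1/2}Q_{\pm}P^{1/2})\le 2d_{ij}\sigma_{ij}$, so the admissible range in Lemma \ref{lemma:nonstandard_chi_square2} is $|\lambda|\le 1/(4d_{ij}\sigma_{ij})$, exactly as claimed. The only delicate point I anticipate is handling the conventions for enumerating pairs in $\orb(i,j)$ (unordered, counted once) when the orbit contains diagonal edges $(a,a)$ or when an edge is fixed by some group element; in those degenerate cases the self-loop contribution to $Q_{+}$ is $4 e_a e_a^T$ rather than two, but the row-sum bound $2d_{ij}$ still goes through, so the lemma is unaffected.
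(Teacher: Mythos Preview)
Your proposal is correct and follows essentially the same approach as the paper: both express $U_{ij}^{(k)}$ and $V_{ij}^{(k)}$ as centered Gaussian quadratic forms $Z^T Q_{\pm} Z - \Tr(Q_{\pm}P)$ in the vector $Z$ indexed by $\mc{V}(i,j)$, verify the trace identity, and invoke Lemma \ref{lemma:nonstandard_chi_square2}. The only minor difference is in bounding $\|Q_{\pm}\|$ by $2d_{ij}$: the paper writes $Q_{+}=(A^{+})^{T}A^{+}$ for a node--edge incidence matrix $A^{+}$ and uses a Rayleigh-quotient argument via $(x_a+x_b)^2\le 2(x_a^2+x_b^2)$, whereas you use $\|Q_{\pm}\|\le\|Q_{\pm}\|_{\infty,\infty}$ together with a row-sum count; both routes yield the same bound.
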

%
\begin{proof}
Fix a particular sample $k$, a pair $(i,j)$, and consider the corresponding $\orb(i,j)$ and $\mc{V}(i,j)$. Let $Z \in \R^{|\mc{V}(i,j)|}$ be the vector of all the distinct nodes that occur on the edge orbit $\orb(i,j)$. Let $W^+$ be the vector of all the terms $X_{g(i)}^{(k)}+X_{g(j)}^{(k)}$, and $W^-$ be the vector of all the terms $X_{g(i)}^{(k)}-X_{g(j)}^{(k)}$. Then there are matrices $A^+$ and $A^-$ (node-edge incidence matrices) such that 
$$
W^+=A^+Z \qquad W^-=A^-Z.
$$
Note that $U_{ij}^{(k)}=(W^{+})^TW^+ - 2|\orb(i,j)|(1+\rho_{ij})$ and $V_{ij}^{(k)}=(W^-)^TW^- - 2|\orb(i,j)|(1-\rho_{ij})$.
Let $\Sigma_Z$ be the submatrix of the covariance matrix associated to $Z$. Then we note that we can write $$(W^{+})^TW^+=Z^T(A^+)^{T}A^+Z$$
where $Z \in \R^{|\mc{V}(i,j)|} \sim \mc{N}(0,\Sigma_Z)$. Note that by definition of $A^+$ and $Z$, $\Tr \l \l A^+ \r^T A^+ \Sigma_Z \r=2\orb(i,j)(1+\rho_{ij})$ (similar equality holds for $A^-$). 
To be able to invoke Lemma \ref{lemma:nonstandard_chi_square2} we need to bound two quantities: 
\begin{align*}
 \lambda_{\max} \l (\Sigma_Z^{\frac{1}{2}})^{T}(A^+)^{T}A^+\Sigma_Z^{\frac{1}{2}} \r  \text{ and }  \lambda_{\max} \l (\Sigma_Z^{\frac{1}{2}})^{T}(A^-)^{T}A^-\Sigma_Z^{\frac{1}{2}} \r. \end{align*}

Note that 
\begin{align*}
\lambda_{\max} \l (\Sigma_Z^{\frac{1}{2}})^{T}(A^+)^{T}A^+\Sigma_Z^{\frac{1}{2}} \r &\leq \| \l A^{+}\r^T A^+ \| \|\Sigma_Z \| \\
& \leq \sigma_{ij}  \| \l A^{+}\r^T A^+ \|.
\end{align*}
By the defninition of $A^+$, we have
\begin{align*}
\| \l A^{+}\r^T A^+ \| &= \max _{x \neq 0} \frac{\sum_{(g(i), g(j))\in \orb(i,j)}(x(i)+x(j))^2}{\sum_{k \in \mc{V}(i,j)} x_k^2} \\
& \leq \max _{x \neq 0} \frac{2\sum_{(g(i), g(j))\in \orb(i,j)} \l x(i)^2+x(j)^2\r}{\sum_{k \in \mc{V}(i,j)} x_k^2} \\
& \leq \max _{x \neq 0} \frac{2 d_{ij}\sum_{k \in \mc{V}(i,j)} x_k^{2} }{\sum_{k \in \mc{V}(i,j)} x_k^2} \\
&\leq 2d_{ij}.
\end{align*}
Hence 
$\lambda_{\max} \l (\Sigma_Z^{\frac{1}{2}})^{T}(A^+)^{T}A^+\Sigma_Z^{\frac{1}{2}} \r \leq 2d_{ij}\sigma_{ij}$.
(Similar inequalities hold for $A^-$).

Now using Lemma \ref{lemma:nonstandard_chi_square2}, the result follows.
\end{proof}
\begin{lemma}
$\frac{\hat{\Sigma}_{\mc{R},ij}^{(k)}}{\sqrt{\Sigma_{ii}\Sigma_{jj}}} = \frac{1}{|\orb(i,j)|} Y_{ij}^{(k)}.$
\end{lemma}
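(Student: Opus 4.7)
The plan is to unpack the Reynolds average defining $\hat{\Sigma}^{(k)}_{\mc{R}}$ at the $(i,j)$ entry, and then group terms in the sum over $\mf{G}$ according to the orbit of the pair $(i,j)$ via the orbit–stabilizer theorem.

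First I would write out the $(i,j)$ entry directly. Since $\Pi_g$ is the permutation matrix associated to $g$, conjugation permutes indices: $(\Pi_g X^{(k)} (X^{(k)})^T \Pi_g^T)_{ij} = X^{(k)}_{g(i)} X^{(k)}_{g(j)}$ (after a routine re-indexing $g \mapsto g^{-1}$, which is a bijection of $\mf{G}$). Therefore
\begin{equation*}
\hat{\Sigma}^{(k)}_{\mc{R}, ij} \;=\; \frac{1}{|\mf{G}|} \sum_{g \in \mf{G}} X^{(k)}_{g(i)} X^{(k)}_{g(j)}.
\end{equation*}

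Next I would partition the sum on the right according to the value of the pair $(g(i), g(j))$. Two group elements $g, h$ send $(i,j)$ to the same pair precisely when $h^{-1}g$ lies in the stabilizer $\mathrm{Stab}(i,j) = \{g \in \mf{G} : (g(i),g(j)) = (i,j)\}$, so each element of the orbit $\orb(i,j)$ is hit exactly $|\mathrm{Stab}(i,j)|$ times. By the orbit–stabilizer theorem, $|\mf{G}| = |\orb(i,j)| \cdot |\mathrm{Stab}(i,j)|$, so
\begin{equation*}
\hat{\Sigma}^{(k)}_{\mc{R}, ij} \;=\; \frac{1}{|\orb(i,j)|} \sum_{(g(i),g(j)) \in \orb(i,j)} X^{(k)}_{g(i)} X^{(k)}_{g(j)}.
\end{equation*}

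Finally I would divide both sides by $\sqrt{\Sigma_{ii}\Sigma_{jj}}$ and push the normalization inside the sum. Invariance of $\Sigma$ under $\mf{G}$ gives $\Sigma_{g(i),g(i)} = \Sigma_{ii}$ and $\Sigma_{g(j),g(j)} = \Sigma_{jj}$, so each summand becomes $\bar{X}^{(k)}_{g(i)} \bar{X}^{(k)}_{g(j)}$, matching the definition \eqref{eq:Yij} of $Y^{(k)}_{ij}$. This yields the claimed identity. The only substantive step is the orbit–stabilizer reduction; the rest is bookkeeping, so I do not expect a real obstacle here.
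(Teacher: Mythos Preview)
Your proposal is correct and follows essentially the same route as the paper: expand the $(i,j)$ entry of the Reynolds average, collapse the sum over $\mf{G}$ to a sum over the orbit via orbit--stabilizer (the paper phrases this as $|\mf{G}|/|\orb(i,j)| = n_{ij}$, the stabilizer size), and then divide by $\sqrt{\Sigma_{ii}\Sigma_{jj}}$. You are slightly more explicit than the paper about the $g \mapsto g^{-1}$ re-indexing and about why the normalization can be pushed inside the sum using $\mf{G}$-invariance of $\Sigma$, but the argument is the same.
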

\begin{proof}
\begin{align*}
\hat{\Sigma}_{\mc{R},ij}&=  \frac{1}{|\mathfrak{G}|} \sum_{g \in \mathfrak{G}}  X_{g(i)}X_{g(j)}\\
&=\frac{1}{|\mathfrak{G}|} \sum_{(g(i), g(j)) \in \orb(i,j)} n_{ij} X_{g(i)}X_{g(j)},
\end{align*}
where $n_{ij}$ is the number of times edge $(i,j)$ (and hence all other edges in the orbit) are repeated in the Reynolds average. Now note that the size of the orbit divides the size of the group, and in fact $\frac{|\mathfrak{G}|}{|\orb(i,j)|}=n_{ij}$ (in fact both these are equal to the cardinality of the stabilizer of the edge $(i,j)$). Dividing throughout by $\sqrt{\Sigma_{ii}\Sigma_{jj}}$, we get the desired result.
\end{proof}

Let 
\begin{align} \label{eq:Uij}
U_{ij} =\frac{1}{4n|\orb(i,j)|} \sum_{k=1}^{n} U_{ij}^{(k)} \qquad
V_{ij} =\frac{1}{4n|\orb(i,j)|} \sum_{k=1}^{n} V_{ij}^{(k)} \qquad
Y_{ij} =\frac{1}{4n|\orb(i,j)|} \sum_{k=1}^{n} Y_{ij}^{(k)}.
\end{align}

\begin{lemma}\label{lemma:prob_union_bnd}
$$
\Prob \l |\hat{\Sigma}_{ij}-\Sigma_{ij}|>\sqrt{\Sigma_{ii}\Sigma_{jj}}t \r \leq \Prob \l |U_{ij}|>\frac{t}{2} \r + \Prob \l |V_{ij}|>\frac{t}{2} \r.
$$
\end{lemma}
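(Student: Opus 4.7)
The plan is to unwind the definitions to show that $\hat{\Sigma}_{ij}-\Sigma_{ij}$, after rescaling by $\sqrt{\Sigma_{ii}\Sigma_{jj}}$, is exactly $U_{ij}-V_{ij}$, and then apply a triangle inequality plus union bound.

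First, I would sum the previous lemma over the $n$ samples to obtain
\[
\frac{\hat{\Sigma}_{ij}}{\sqrt{\Sigma_{ii}\Sigma_{jj}}} \;=\; \frac{1}{n}\sum_{k=1}^{n}\frac{\hat{\Sigma}^{(k)}_{\mathcal{R},ij}}{\sqrt{\Sigma_{ii}\Sigma_{jj}}} \;=\; \frac{1}{n|\orb(i,j)|}\sum_{k=1}^{n}Y_{ij}^{(k)},
\]
since $\hat{\Sigma}=\mc{P}_{\mf{G}}(\Sigma^n)$ is the sample average of the per-sample Reynolds averages $\hat{\Sigma}^{(k)}_{\mc{R}}$. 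Next, I would invoke the identity \eqref{eq:Uijk}, namely $Y_{ij}^{(k)}-|\orb(i,j)|\rho_{ij} = \tfrac14(U_{ij}^{(k)}-V_{ij}^{(k)})$, and average it over $k=1,\ldots,n$; dividing by $|\orb(i,j)|$ and using the definitions \eqref{eq:Uij} of $U_{ij}$ and $V_{ij}$ yields
\[
\frac{1}{n|\orb(i,j)|}\sum_{k=1}^{n}Y_{ij}^{(k)} \;-\; \rho_{ij} \;=\; U_{ij}-V_{ij}.
\]

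Combining the two displays and recalling $\rho_{ij}=\Sigma_{ij}/\sqrt{\Sigma_{ii}\Sigma_{jj}}$, I obtain the clean identity
\[
\frac{\hat{\Sigma}_{ij}-\Sigma_{ij}}{\sqrt{\Sigma_{ii}\Sigma_{jj}}} \;=\; U_{ij} - V_{ij}.
\]
So the event $\{|\hat{\Sigma}_{ij}-\Sigma_{ij}|>\sqrt{\Sigma_{ii}\Sigma_{jj}}\,t\}$ coincides with $\{|U_{ij}-V_{ij}|>t\}$. The triangle inequality then gives $\{|U_{ij}-V_{ij}|>t\}\subseteq\{|U_{ij}|>t/2\}\cup\{|V_{ij}|>t/2\}$, and a union bound delivers the claim.

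There is no serious obstacle here; it is essentially a bookkeeping argument. The only thing to be careful about is tracking the factors of $\tfrac14$, $n$, and $|\orb(i,j)|$ consistently so that the normalizations in the definitions of $U_{ij}$, $V_{ij}$, and $Y_{ij}$ in \eqref{eq:Uij} line up with those coming from \eqref{eq:Uijk} and from the averaging in the definition of $\hat{\Sigma}$.
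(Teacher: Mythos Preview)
Your proposal is correct and follows essentially the same approach as the paper: derive the identity $\dfrac{\hat{\Sigma}_{ij}-\Sigma_{ij}}{\sqrt{\Sigma_{ii}\Sigma_{jj}}}=U_{ij}-V_{ij}$ by combining the previous lemma with \eqref{eq:Uijk} and \eqref{eq:Uij}, then apply the triangle inequality and a union bound.
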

\begin{proof}
We have 
\begin{align*}
\frac{\hat{\Sigma}_{ij}-\Sigma_{ij}}{\sqrt{\Sigma_{ii}\Sigma_{jj}}}&=\frac{1}{n|\orb(i,j)|}\sum_{k=1}^{n}Y_{ij}^{(k)} - \rho_{ij} \\
&=\frac{1}{n}\sum_{k=1}^{n} \frac{1}{|\orb(i,j)|} \l Y_{ij}^{(k)} - |\orb(i,j)| \rho_{ij} \r \\
&=\frac{1}{n}\sum_{k=1}^{n} \frac{1}{4|\orb(i,j)|}  \l U_{ij}^{(k)}-V_{ij}^{(k)}  \r  \qquad \text{(using \eqref{eq:Uijk})} \\
&=U_{ij}-V_{ij} \qquad \text{(by \eqref{eq:Uij}).}
\end{align*}
Now $$\left\{|\hat{\Sigma}_{ij}-\Sigma_{ij}|>\sqrt{\Sigma_{ii}\Sigma_{jj}}t \right\} = \left\{ |U_{ij}-V_{ij}|>t \right\} \subseteq \left\{|U_{ij}|>\frac{t}{2} \right\} \cup \left\{|V_{ij}|>\frac{t}{2} \right\},$$
from which the conclusion follows.
\end{proof}

\begin{proof}[Proof of Theorem \ref{theorem:rate}]
Note that 
$\Prob \l |U_{ij}| > \frac{t}{2} \r \leq 2  \Prob \l U_{ij} > \frac{t}{2} \r$. 
Now $U_{ij}=\frac{1}{4n|\orb(i,j)|} \sum_{k=1}^{n}U_{ij}^{(k)}$, so that
$$
\Ex \exp (\lambda U_{ij})=\Ex \exp \l \frac{1}{4n|\orb(i,j)|} \sum_{k=1}^{n}U_{ij}^{(k)}\r.
$$
From Lemma \ref{lemma:orbit_rate}, $$\Ex \exp(\lambda U_{ij}^{(k)}) \leq \exp \l |\mc{O}(i,j)| \lambda^2(1+\rho_{ij})  \r\text{for all} \; \lambda \in \left[-\frac{1}{4d_{ij}\sigma_{ij}}, \frac{1}{4d_{ij}\sigma_{ij}} \right].$$
By rescaling, we have
$$\Ex \exp \l \frac{1}{4n|\orb(i,j)|}\lambda U_{ij}^{(k)} \r \leq \exp \l \frac{(1+\rho_{ij})}{16 |\orb(i,j)|n^2} \lambda^2  \r\text{for all} \; \lambda \in \left[-\frac{n|\orb(i,j)|}{d_{ij}\sigma_{ij}}, \frac{n|\orb(i,j)|}{d_{ij}\sigma_{ij}} \right].$$
Since the random variables $U_{ij}^{(k)}$ are independent and identically distributed (they correspond to different samples) for $k=1, \ldots, n$, we have that 
$$\Ex \exp \l \lambda U_{ij} \r \leq \exp \l \frac{(1+\rho_{ij})}{16 |\orb(i,j)|n} \lambda^2  \r\text{for all} \; \lambda \in \left[-\frac{n|\orb(i,j)|}{d_{ij}\sigma_{ij}}, \frac{n|\orb(i,j)|}{d_{ij}\sigma_{ij}} \right].$$
Thus $U_{ij}$ is pre-Gaussian with parameters $\tau^2=\frac{(1+\rho_{ij})}{16 |\orb(i,j)|n}$ and $\Lambda= \frac{n|\orb(i,j)|}{d_{ij}\sigma_{ij}}$. From Lemma \ref{lemma:pregaussian_prop2} we have
\begin{equation*}
\Prob \l |U_{ij}| > \frac{t}{2} \r \leq 
\left\{  \begin{array}{l}
2\exp \l-\frac{2t^2 n |\orb(i,j)|}{1+\rho_{ij}}  \r, \; \text{for } 0 \leq t \leq \frac{1+\rho_{ij}}{16d_{ij}\sigma_{ij}} \\
2 \exp \l-\frac{n |\orb(i,j)|t}{2d_{ij}\sigma_{ij}} \r,  \; \text{for } t > \frac{1+\rho_{ij}}{16d_{ij}\sigma_{ij}}.
\end{array}\right.
\end{equation*}
Another way of rewriting the above is:
\begin{equation*}
\Prob \l |U_{ij}| > \frac{t}{2} \r \leq 
\max  \left\{
2\exp \l-\frac{2t^2 n |\orb(i,j)|}{1+\rho_{ij}}  \r, 
2 \exp \l-\frac{n |\orb(i,j)|t}{2d_{ij}\sigma_{ij}} \r
\right\}.
\end{equation*}

By a similar argument, 
\begin{equation*}
\Prob \l |V_{ij}| > \frac{t}{2} \r \leq 
\max  \left\{
2\exp \l-\frac{2t^2 n |\orb(i,j)|}{1-\rho_{ij}}  \r, 
2 \exp \l-\frac{n |\orb(i,j)|t}{2d_{ij}\sigma_{ij}} \r
\right\}.
\end{equation*}
Let $C_1=\min_{i,j} \left\{\frac{2}{1+\rho_{ij}},\frac{2}{1-\rho_{ij}} \right\}$, and $C_2=\min_{i,j} \left\{\frac{2}{\sigma_{ij}},\frac{2}{\sigma_{ij}} \right\}$. (Note that all these quantities are constants since the spectral norm $\|\Sigma \|$ is bounded by a constant by assumption). Using Lemma \ref{lemma:prob_union_bnd}, we get the following.
$$
\Prob \l |\hat{\Sigma}_{ij}-\Sigma_{ij}|>\sqrt{\Sigma_{ii}\Sigma_{jj}}t \r \leq 
\max  \left\{
\exp \l-C_1 n |\orb(i,j)|t^2  \r, 
 \exp \l-\frac{C_2n |\orb(i,j)|t}{d_{ij}}\r
\right\}.
$$
Noting that $\sqrt{\Sigma_{ii}\Sigma_{jj}}$ is bounded by a constant, we replace $t$ by $\frac{t}{\sqrt{\Sigma_{ii}\Sigma_{jj}}}$, and alter the constants in the exponent suitably to obtain the required result.
\end{proof}

\section{Proof of Theorem \ref{theorem:covariance}}
We define the following function that captures the $\ell_\infty$ norm sample complexity
\begin{equation}
\delta(n,p):=  \max \left\{ \sqrt{\frac{\log p}{n\orb}}, \frac{\log p}{n\orb_d}\right\}.
\end{equation}
It will be convenient to work with the notation $\hat{\Sigma}=\mc{P}_{\mf{G}} \l \Sigma^n \r$.
\begin{proof}
The proof is almost identical to the proof of \cite[Theorem 1]{BickelLevina2}. We present it below for the sake of completeness.
Note that from Corollary \ref{cor:linf_rate1}, 
$$
\max_{i,j} |\hat{\Sigma}_{ij}-\Sigma_{ij} |=O_{P} \l \delta(n,p) \r.
$$
Now we note that 
$$
\|\mc{T}_t(\hat{\Sigma})-\Sigma \| \leq \|\mc{T}_{t}(\Sigma)-\Sigma \| + \|\mc{T}_t(\hat{\Sigma})-\mc{T}_t(\Sigma) \|,
$$
and we will focus on bounding the different terms. 

Recall that for a symmetric matrix $M$, $\|M\| \leq \max_{j= 1, \ldots ,p} \sum_{i=1}^p | M_{ij}|$. (This is because $\|M\| \leq \l \| M \|_{1,1} \|M \|_{\infty, \infty} \r^{\frac{1}{2}}=\|M \|_{1,1}$). As a consequence, the first term is bounded by 
$$
\max_{i} \sum_{j=1}^{p} |\Sigma_{ij}|\one(|\Sigma_{ij}| \leq t) \leq t^{1-q}c_0(p).
$$
The second term can be bounded as
\begin{align*}
\|\mc{T}_{t}(\hat{\Sigma})-\mc{T}_t(\Sigma) \| & \leq \max_{i} \sum_{j=1}^{p} |\hat{\Sigma}_{ij}| \one(|\hat{\Sigma}_{ij}| \geq t, |\Sigma_{ij}|<t)+ \max_{i} \sum_{j=1}^{p} |\Sigma_{ij}| \one(|\hat{\Sigma}_{ij}| < t, |\Sigma_{ij}| \geq t) \\ &+\max_i \sum_{j=1}^{p}|\hat{\Sigma}_{ij}-\Sigma_{ij}| \one(|\hat{\Sigma}_{ij}| \geq t, |\Sigma_{ij}| \geq t) \\
&=\text{I}+\text{II}+\text{III}.
\end{align*}
Now note that 
$$
\text{III} \leq \max_{i,j} |\hat{\Sigma}_{ij}-\Sigma_{ij}| \max_{i} \sum_{j=1}^{p}|\hat{\Sigma}_{ij}|^qt^{-q} =O_{P} \l c_0(p) t^{-q} \delta(n,p)\r. 
$$
For term II we note that
\begin{align*}
\text{II} & \leq \max_{i} \sum_{j=1}^{p} (|\hat{\Sigma}_{ij} -\Sigma_{ij}| +| \hat{\Sigma}_{ij}|) \one(|\hat{\Sigma}_{ij}| < t, |\Sigma_{ij}| \geq t)  \\
& \leq\max_{i,j} |\hat{\Sigma}_{ij} -\Sigma_{ij}| \sum_{j=1}^{p}\one(|\Sigma_{ij}| \geq t) +t \max_{i} \sum_{j=1}^{p} \one(|\Sigma_{ij}|\geq t) \\
&=O_{P} \l c_0(p)t^{-q} \delta(n,p) + c_0(p)t^{1-q}\r.
\end{align*}
To bound I, we pick $\gamma \in (0,1)$ and note that
\begin{align*}
\text{I} & \leq \max_{i} \sum_{j=1}^p |\hat{\Sigma}_{ij}-\Sigma_{ij}|\one(|\hat{\Sigma}_{ij}| \geq t, |\Sigma_{ij}| < t)  + \max_{i} \sum_{j=1}^p |\Sigma_{ij}| \one(|\Sigma_{ij}| < t)  \\
& \leq \max_{i} \sum_{j=1}^p |\hat{\Sigma}_{ij}-\Sigma_{ij}|\one(|\hat{\Sigma}_{ij}| \geq  t, |\Sigma_{ij}| \leq \gamma t) +
\max_{i} \sum_{j=1}^p |\hat{\Sigma}_{ij}-\Sigma_{ij}|\one(|\hat{\Sigma}_{ij}| \geq  t, \gamma t|\Sigma_{ij}| \leq t)  \\
& \qquad \qquad  \qquad + t^{1-q}c_0(p) \\
&  \leq  \max_{i,j}   |\hat{\Sigma}_{ij}-\Sigma_{ij}| \l \sum_{j=1}^p \one (|\hat{\Sigma_{ij}}-\Sigma_{ij} |>(1-\gamma)t) + c_{0}(p)(\gamma t)^{-q}  \r  + t^{1-q}c_0(p)
\end{align*}
\begin{align*}
&  \leq  \max_{i,j}   |\hat{\Sigma}_{ij}-\Sigma_{ij}|  c_{0}(p)(\gamma t)^{-q} + t^{1-q}c_0(p) \; \text{(with high probability)} \\
&=O_{P} \l c_{0}(p)(\gamma t)^{-q} \delta(n,p) +t^{1-q}c_0(p) \r.
\end{align*}
The second to last inequality follows from the fact that:
\begin{align*}
\Prob \l \max_{i} \sum_{j=1}^p \one (|\hat{\Sigma_{ij}}-\Sigma_{ij} |>(1-\gamma)t) >0  \r &=\Prob \l\max_{i,j} |\hat{\Sigma}_{ij}-\Sigma_{ij}|>(1-\gamma)t \r\\
&\leq \frac{1}{p^c} \;
\end{align*}
 for $t$ as chosen in the statement of the theorem. Combining all these inequalities we get the required result.
\end{proof}
 \end{document}